\newtheorem{theorem}{Theorem}[section]
\newtheorem{lemma}[theorem]{Lemma}
\newtheorem{corollary}[theorem]{Corollary}
\theoremstyle{definition}
\newtheorem{definition}[theorem]{Definition}
\newtheorem{example}[theorem]{Example}
\newtheorem{remark}[theorem]{Remark}
\newtheorem{conjecture}[theorem]{Conjecture}
\numberwithin{equation}{section}
\title[Geometry of varieties for graded maximal Cohen--Macaulay modules]{Geometry of varieties for graded maximal Cohen--Macaulay modules}
\keywords{graded maximal Cohen--Macaulay modules, graded countable Cohen--Macaulay representation type, module variety.}
\subjclass[2010]{Primary  13C14;  Secondary 16G60.}
\dedicatory{Dedicated to Professor Yuji Yoshino on the occasion of his retirement.} 
\thanks{The author was supported by JSPS KAKENHI Grant Number 18K13399.}
\author{Naoya Hiramatsu} 
\address{General Education Program, National Institute of Technology, Kure College, 2-2-11, Agaminami, Kure Hiroshima, 737-8506 Japan}
\email{hiramatsu@kure-nct.ac.jp} 
\begin{document}
\def\Ank{{\mathbb{A}^n (k)}}
\def\CM{{\mathrm{CM}}}
\def\Coker{\mathrm{Coker}}
\def\depth{\mathrm{depth}}
\def\End{\mathrm{End}}
\def\grEnd{{}^{\ast}\mathrm{End}}
\def\Hom{\mathrm{Hom}}
\def\grHom{{}^{\ast}\mathrm{Hom}}
\def\Im{\mathrm{Im}}
\def\Ker{\mathrm{Ker}}
\def\m{\mathfrak m}
\def\Mod{\mathrm{Mod}}
\def\mod{\mathrm{mod}}
\def\grmodR{\mathrm{mod}^{\Z}(R)}
\def\md{\mathrm{Mod}_d ^A (k)}
\def\OO{\mathcal O} 
\def\p{\mathfrak p}
\def\q{\mathfrak q}
\def\RepS{\mathrm{Rep}_S }
\def\RepSRV{\mathrm{Rep}_S (R, V)}
\def\Z{\mathbb Z}
\maketitle


\begin{abstract}
We study a variety for graded maximal Cohen--Macaulay modules, which was introduced by Dao and Shipman.     
The main result of this paper asserts that there are only a finite number of isomorphism classes of graded maximal Cohen--Macaulay modules with fixed Hilbert series over Cohen--Macaulay algebras of graded countable representation type.    
\end{abstract}

\section{Introduction}
Let $k$ be an algebraically closed field and $A$ a finite-dimensional $k$-algebra.
Denote by $\md$ the set of $A$-module structures on $k^d$.
Then $\md$ is an algebraic set, which is called the {\em module variety} of $d$-dimensional $A$-modules. 
It has been studied very successfully by many authors including Bongartz \cite{B}, Gabriel \cite{G74} and Morrison \cite{M80}.

Let $R = \oplus _{i=0}^{\infty} R_i$ be a commutative positively graded affine $k$-algebra with $R_0 = k$. 
Let $S$ be a graded Noetherian normalization of $R$. 
Then $R$ is a finitely generated graded $S$-module. 
A finitely generated graded $R$-module $M$ is said to be maximal Cohen--Macaulay if $M$ is graded free as a graded $S$-module. 
Then ${}_S M \cong {}_S S \otimes _k V$ for some finite dimensional graded $k$-vector space $V$. 
Dao and Shipman \cite{DS17} introduce the representation scheme $\RepSRV$ for graded maximal Cohen--Macaulay $R$-modules and explore it. 
The scheme gives the analogy of a module variety for finitely generated modules over a finite dimensional algebra.

First we shall show the following theorem, which gives an analogy of a result of Bongartz \cite{B}.

\begin{theorem}\label{Main 2}[Theorem \ref{morita}]
Let $A$ and $B$ be graded Cohen--Macaulay $k$-algebras with $\dim _k A_i = \dim _k B_i$ for all $i$. 
Suppose that $B$ is an integral domain and Gorenstein on the punctured spectrum. 
Then the following conditions are equivalent:
\begin{itemize}
\item[(a)] $A$ and $B$ are isomorphic graded $S$-algebras. 

\item[(b)] For a certain finite dimensional graded $k$-vector space $V$, $\RepS (A, V)(k)$ and $\RepS (B, V)(k)$ are $\grEnd _S (S \otimes _k V)$-equivariantly isomorphic. 
\end{itemize} 
\end{theorem}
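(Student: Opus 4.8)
The plan is to transplant Bongartz's proof of the geometric Morita equivalence \cite{B} to the graded Cohen--Macaulay setting, the role of the regular representation $_AA$ being played by the point of $\RepS$ at which $A$ acts on itself. First the set-up. Since $A$ and $B$ are graded Cohen--Macaulay and module-finite over the graded regular ring $S$, the Auslander--Buchsbaum formula shows $_SA$ and $_SB$ are graded free; writing $_SA\cong S\otimes_k V_0$, the hypothesis $\dim_k A_i=\dim_k B_i$ forces $_SB\cong S\otimes_k V_0$ as well and $\dim_k A_i=\dim_k B_i$ for all $i$. Take $V=V_0$. A $k$-point of $\RepS(A,V_0)$ is a graded $S$-algebra homomorphism $\rho\colon A\to\grEnd_S(S\otimes_k V_0)$, equivalently a graded MCM $A$-module $M_\rho$ with $_SM_\rho=S\otimes_k V_0$; the group $G:={}^{\ast}\mathrm{Aut}_S(S\otimes_k V_0)$ (the unit group of $\grEnd_S(S\otimes_k V_0)$, an honest affine algebraic $k$-group, since the degree-zero part of $\grEnd_S(S\otimes_k V_0)$ is finite dimensional over $k$) acts by conjugation, and a $\grEnd_S(S\otimes_k V_0)$-equivariant isomorphism as in the statement is in particular $G$-equivariant. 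The implication $(a)\Rightarrow(b)$ is then formal: a graded $S$-algebra isomorphism $\psi\colon A\to B$ induces $\RepS(B,V_0)(k)\to\RepS(A,V_0)(k)$, $\rho\mapsto\rho\circ\psi$, an isomorphism of varieties that is $G$-equivariant because the $G$-action is by postcomposition and hence commutes with precomposition by $\psi$; so $(b)$ holds with $V=V_0$, which is the ``certain $V$'' in the statement.

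For $(b)\Rightarrow(a)$, let $\Phi\colon\RepS(A,V_0)(k)\to\RepS(B,V_0)(k)$ be the given $G$-equivariant isomorphism (I will treat the case $V=V_0$; a general $V$ is handled similarly or reduces to this), and let $\rho_A\in\RepS(A,V_0)(k)$ be the point at which $A$ acts on $_SA\cong S\otimes_k V_0$ by left multiplication. Then $\rho_A(A)\subseteq\grEnd_S(S\otimes_k V_0)$ is a graded $S$-subalgebra isomorphic to $A$, its centraliser there is the ring of right multiplications, $\cong A^{op}$ and equal to $\grEnd_A(M_{\rho_A})$, and --- the regular module being faithful and balanced --- the bicentraliser of $\grEnd_A(M_{\rho_A})$ inside $\grEnd_S(S\otimes_k V_0)$ is again $\rho_A(A)$. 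The crux is to push this picture across $\Phi$: using the orbit/stabiliser/tangent-space dictionary for module schemes (classical for finite-dimensional algebras, and for $\RepS$ due to Dao--Shipman \cite{DS17}), one identifies at $\sigma:=\Phi(\rho_A)$ the endomorphism algebra $\grEnd_B(M_\sigma)$ with $\grEnd_A(M_{\rho_A})=A^{op}$ as graded $S$-subalgebras of $\grEnd_S(S\otimes_k V_0)$ --- the degree-zero part being immediate because $\Phi$ carries stabilisers to stabilisers, $\mathrm{Stab}_G(\sigma)=\mathrm{Stab}_G(\rho_A)$. Since $\sigma(B)$ always lies in the centraliser of $\grEnd_B(M_\sigma)$, it is therefore contained in the centraliser of $A^{op}$, namely $\rho_A(A)\cong A$, and we obtain graded $S$-algebra maps $B\twoheadrightarrow\sigma(B)\hookrightarrow\rho_A(A)\cong A$.

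It remains to upgrade both arrows to isomorphisms, and here the two extra hypotheses on $B$ enter. First, $B\twoheadrightarrow\sigma(B)$ is an isomorphism: $M_\sigma$ is a graded MCM module over the domain $B$, hence torsion-free (its only associated prime is $(0)$), hence faithful, so $\sigma$ is injective and $\sigma(B)\cong{}_SB$ as graded $S$-modules. Second, the inclusion $\sigma(B)\hookrightarrow\rho_A(A)$ is an equality: it is an inclusion of graded $S$-submodules of $\grEnd_S(S\otimes_k V_0)$, and in each degree $\dim_k\sigma(B)_i=\dim_k B_i=\dim_k A_i=\dim_k\rho_A(A)_i$, so $\sigma(B)_i=\rho_A(A)_i$. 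Hence $B\cong\sigma(B)=\rho_A(A)\cong A$ as graded $S$-algebras, which is $(a)$.

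The main obstacle is the transport step: showing that the $G$-equivariant isomorphism of varieties really does recover, out of $\RepS(B,V_0)(k)$ with its $G$-action, the \emph{graded} subalgebra $\grEnd_B(M_\sigma)\subseteq\grEnd_S(S\otimes_k V_0)$ --- not merely its degree-zero part, which the stabiliser already gives. Over a field this is Jacobson density plus the double-centraliser theorem, but neither is available over the ring $S$, so one works over $\operatorname{Frac}(S)$ and descends, which requires the relevant $\grHom$-modules to be reflexive (equivalently MCM) over $S$. This is exactly why $B$ is assumed Gorenstein on the punctured spectrum: it forces the maximal Cohen--Macaulayness of these $\grHom$-modules and is the condition under which Dao--Shipman's $\RepS(B,V)$ has the expected tangent spaces, so that ``$G$-equivariant isomorphism of varieties'' is strong enough to carry the endomorphism data across. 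Granting this, everything else is the elementary Cohen--Macaulay and Hilbert-series bookkeeping above.
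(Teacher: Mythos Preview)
There is a genuine gap at the transport step, and it comes from misreading the equivariance hypothesis. In this paper, ``$\grEnd_S(S\otimes_k V)$-equivariant'' is \emph{not} $G_V$-equivariance: the definition (stated just before the theorem) requires that for every degree $i$ and every $\alpha_i\in\End_S(S\otimes_k V)_i$, the implication $\alpha_i\circ\mu=\mu\circ\alpha_i\ \Rightarrow\ \alpha_i\circ\Phi(\mu)=\Phi(\mu)\circ\alpha_i$ hold. By Lemma~\ref{morphism} this says exactly that $\grEnd_A(M_\mu)\subseteq\grEnd_B(M_{\Phi(\mu)})$ as graded $S$-subalgebras of $\grEnd_S(S\otimes_k V)$; applying the same to $\Phi^{-1}$ gives equality. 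Taking $\mu=\rho_A$ yields $A\cong\grEnd_A(A)=\grEnd_B(Y)$ in one line, and your ``main obstacle'' never arises. The Remark immediately following the theorem says this explicitly: with only $G_V$-equivariance one recovers just the degree-$0$ piece, and that is precisely why the stronger notion is imposed. By downgrading to $G$-equivariance you discard the hypothesis that does the work, and your proposed repair via $\operatorname{Frac}(S)$, reflexivity and tangent spaces is left as an unproved sketch (``Granting this''); it is not clear any such argument goes through from $G_V$-equivariance alone.

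A secondary point: you have swapped the roles of the two hypotheses on $B$. In the paper the Gorenstein-on-the-punctured-spectrum assumption is spent (together with the domain hypothesis) on showing $Y$ is torsion-free, hence that $B\hookrightarrow\grEnd_B(Y)\cong A$; the equivariance hypothesis alone handles the endomorphism-algebra identification. You instead get torsion-freeness directly from ``MCM over a CM domain'' and try to use the Gorenstein hypothesis for the transport step. Once the equivariance condition is read as the paper intends, that transport step is free, and the remaining Hilbert-series dimension count in your last paragraph matches the paper's finish.
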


We say that a graded Cohen--Macaulay ring $R$ is of graded countable Cohen--Macaulay representation type if there are countably many isomorphism classes of indecomposable graded maximal Cohen--Macaulay $R$-modules up to shift.
In Section \ref{countable}, we focus on a graded Cohen--Macaulay algebra which is of countable Cohen--Macaulay representation type. 
We shall give the following finiteness result about graded maximal Cohen--Macaulay modules.

\begin{theorem}[Theorem \ref{B}]\label{Main}
Let $k$ be an algebraically closed uncountable field and let $R$ be of graded countable Cohen--Macaulay representation type. 
For each finite dimensional $k$-vector space $V$, there are finitely many isomorphism classes of maximal Cohen--Macaulay $R$-modules which are isomorphic to $S \otimes _k V$ as graded $S$-modules. 
In other words, there are only a finite number of isomorphism classes of maximal Cohen--Macaulay $R$-modules with fixed Hilbert series. 
\end{theorem}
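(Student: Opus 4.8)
The plan is to realise the isomorphism classes in question as the orbits of an algebraic group acting on a finite-dimensional variety over $k$, and then to combine a Krull--Schmidt count with the standard fact that a variety over an uncountable algebraically closed field is not a countable union of proper closed subvarieties.

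First I would set up the geometric picture following Dao--Shipman. Since $R$ is a finitely generated graded $S$-module, an $R$-module structure on $S\otimes_k V$ extending the $S$-structure is determined by the $S$-linear actions of finitely many homogeneous module generators $\rho_1,\dots,\rho_m$ of $R$, which are elements of the finite-dimensional spaces $\grEnd_S(S\otimes_k V)_{\deg \rho_i}$ subject to the closed conditions coming from the relations in $R$. Hence $\RepSRV(k)$ is the set of $k$-points of a closed subscheme of a finite-dimensional affine space, and the algebraic group $G:=\grEnd_S(S\otimes_k V)^{\times}$ of graded $S$-automorphisms of $S\otimes_k V$ acts on it with orbits exactly the isomorphism classes of maximal Cohen--Macaulay $R$-modules $M$ with ${}_S M\cong S\otimes_k V$. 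I record that $G$ is connected, being the unit group of a finite-dimensional $k$-algebra.

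Second, I would bound the number of orbits using the hypothesis. Any maximal Cohen--Macaulay $M$ with ${}_S M\cong S\otimes_k V$ has $S$-rank $\dim_k V$; writing $M=\bigoplus_{j=1}^t N_j$ by graded Krull--Schmidt with each $N_j$ indecomposable maximal Cohen--Macaulay, and comparing Hilbert series of the $S$-free modules ${}_S N_j$ with that of $S\otimes_k V$, one sees $t\le \dim_k V$ and that the degrees of the $S$-free generators of each $N_j$ lie in the fixed finite window of degrees occupied by $V$. Since $R$ has graded countable Cohen--Macaulay representation type, each $N_j$ is a shift $N^{(l)}(s)$ of one of countably many indecomposables, and the window constraint forces $s$ into a finite range for each fixed $l$; thus the $N_j$ range over a countable set, and $M$ is determined by an unordered $t$-tuple ($t\le\dim_k V$) from that set. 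Consequently there are at most countably many such $M$, i.e.\ at most countably many $G$-orbits on $\RepSRV(k)$.

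Finally I would upgrade ``countably many'' to ``finitely many'' by induction on dimension. Decompose $\RepSRV(k)$ into its (finitely many) irreducible components; since $G$ is connected, each component is $G$-stable and is the union of a subfamily of the orbits, so it suffices to treat an irreducible component $X$. As $X$ is covered by countably many orbits, each of which is constructible, the uncountability of $k$ forces some orbit $O$ to be dense in $X$; being an orbit, $O$ is locally closed, hence it is a nonempty open subset of $X$. Then $X\smallsetminus O$ is a closed $G$-stable subvariety of strictly smaller dimension, again a union of orbits, and induction (with the trivial base case $\dim X=0$) finishes the argument; this yields finitely many orbits, hence finitely many isomorphism classes. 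I expect the main obstacle to be the passage in this last paragraph: making the ``no countable union of proper closed subvarieties'' principle produce a dense \emph{orbit} (rather than merely a dense constructible set) is exactly where the local closedness of orbits and the uncountability of $k$ must be used together, and it is the only place the hypothesis on $k$ enters.
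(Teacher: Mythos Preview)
Your argument is correct and follows essentially the same route as the paper: reduce to showing that $\RepSRV(k)$ has only finitely many $G_V$-orbits, observe that the countable-representation-type hypothesis gives at most countably many orbits (your Krull--Schmidt bookkeeping spells this out more carefully than the paper, which simply asserts it), and then use that orbits are locally closed together with the uncountability of $k$ to run a dimension-induction showing a countable locally closed partition must be finite. The only cosmetic difference is packaging: the paper isolates the last step as a general lemma about arbitrary countable disjoint locally closed covers of an affine algebraic set (so it never needs $G$ to be connected or the irreducible components to be $G$-stable), whereas you work directly with orbits.
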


As an application, we show that the notion of graded countable Cohen--Macaulay representation type coincides with the notion of graded  discrete Cohen--Macaulay representation type in the sense of Drozd and Tovpyha \cite{DT14} provided that the algebra has at most an isolated singularity (Corollary \ref{discrete}).

\section{A variety for graded maximal Cohen--Macaulay modules}\label{variety}

Throughout the paper, $k$ is an algebraically closed field of characteristic $0$ and all algebras $R$ are commutative positively graded affine $k$-algebras. 
That is, $\displaystyle R = \oplus _{i=0}^{\infty} R_i$ with $R_0 = k$ and $R$ is finitely generated. 
We denote the unique maximal ideal by $R_{+} = \oplus _{i>0}^{\infty} R_i$. 
Then we can take a graded Noetherian normalization $S$ of $R$. 
That is, $S = k[y_1, ..., y_n ] \subseteq R$ where $n= \dim R$ such that $R$ is a finitely generated graded $S$-module (see \cite[Theorem 1.5.17]{BH}). 
Let $M$ be a finitely generated $\Z$-graded $R$-module.  
For $i \in \Z$, $M(i)$ is defined by $M(i)_n = M _{n+i}$. 
We denote by $\Hom_R (M, N) _i$ consisting of homogenous morphisms of degree $i$ and we set 
$$
\grHom _R (M, N) = \oplus _{i \in \Z} \Hom _R (M, N) _i 
$$
for graded $R$-modules $M$ and $N$.

\begin{definition}
A finitely generated graded $R$-module $M$ is said to be maximal Cohen--Macaulay (abbr. MCM) if $M$ is graded free as an $S$-module: 
$$
M \cong \bigoplus ^m _{i=1} S (l_i). 
$$
And then $M \cong S \otimes _k V$ for some finite dimensional graded $k$-vector space $V$, so that we call it a graded MCM $R$-module of type $V$. 
\end{definition}

\begin{remark}\label{Hilbert series}
Let $V$ be a finite dimensional graded $k$-vector space and $M$ and $N$ be graded MCM $R$-modules of type $V$. 
Then one can see that $\dim _k M_i = \dim _k N_i $ for all $i$. 
Hence $M$ and $N$ have the same Hilbert series. 
\end{remark}

We recall the notion of a variety for graded MCM modules, which plays a key role in our results. 
For details we recommend the reader to refer to \cite{DS17}.

Given a graded MCM $R$-module $M$, since $M \cong S \otimes _k V$, there exists a degree $0$ graded $S$-algebra homomorphism $\mu : R \to \grEnd _S (S \otimes _k V)$, which is called a matrix representation of $M$: 
$$
\mu \in \Hom _{S-alg} (R, \grEnd _S (S \otimes _k V))_0. 
$$
Then $\Hom _{S-alg} (R, \grEnd _S (S \otimes _k V))_0$ is an algebraic variety over $k$. 
We denote it by $\RepSRV (k)$.

\begin{example}
Let $R= k[x, y] /(x^2)$ with $\deg x = \deg y =1$ and $S= k[y]$ a graded Noetherian normalization of $R$. 
Set $V = k \oplus k(-1)$. 
Then giving a graded MCM $R$-module which is isomorphic to $S \otimes _k V = S \oplus S(-1)$ is equivalent to giving a $\mu  \in \End _S (S \otimes _k V)_1$ with $\mu ^2 = 0$. 
Note that 
$$
\End _S (S \otimes _k V)_1 = \left \{
\mu (x) = \left(\begin{smallmatrix}
ay & by^2 \\
c & dy
\end{smallmatrix}
\right)
|
a, b, c, d \in k
\right\}. 
$$  
Hence one can show that 
$$
\RepSRV (k) 
= \mathit{V}(a^2 + bc, ab+bd, ac + bc, bc + d^2) \subset \mathbb{A}_k ^{4}. 
$$
\end{example}

\begin{remark}\label{DS}
Dao and Shipman \cite{DS17} introduce a functor $\RepSRV$ from the category of commutative $k$-algebras to sets. 
For a commutative $k$-algebra $T$, they define the notion of $T$-flat family of $V$-framed $R$-modules (\cite[Definition 2.1]{DS17}), and $\RepSRV (T)$ is the set of these modules. 
They show that $\RepSRV$ is represented by an affine variety of finite type (\cite[Proposition 2.2]{DS17}). 
\end{remark}

\begin{remark}\label{F}
\begin{itemize}

\item[(1)] The algebraic group $G_V = \mathrm{Aut} _S (S \otimes _k V )_0$ acts on $\RepSRV (k)$ by conjugation, and we have 1-1 correspondence;
$$
\begin{array}{l}
\left\{ \text{$G_V$-orbits in $\RepSRV (k)$} \right \} 
\xleftrightarrow{1-1} \left\{ M \ | \ M \cong S \otimes_k V \text{as graded $S$-modules } \right \}/\cong .
\end{array}
$$

\item[(2)] Note that $\RepSRV (k)$ parameterizes graded MCM $R$-modules with fixed Hilbert series and $S$-module structure (see Remark \ref{Hilbert series}).  
\end{itemize}
\end{remark}

Let $X$ be a $G$-variety. 
Namely $X$ is a variety equipped with an action of the group $G$.  
For $x \in X$, we denote by $\OO (x)$ the $G$-orbit of $x$.

\begin{example}
Let $R= k[x, y] /(x^2)$ with $\deg x = \deg y =1$ and $V = k \oplus k(-1)$. 
Then $\RepSRV (k)$ contains the following three $G_V$-orbits:  
$$
R \cong \OO \left( \begin{pmatrix} 0 &0 \\ 1&0\end{pmatrix} \right),  
(x, y^2)R(1) \cong \OO \left( \begin{pmatrix} 0 &y^2 \\ 0&0\end{pmatrix}\right), 
R/(x) (1) \oplus R/(x)  \cong \OO \left( \begin{pmatrix} 0 &0 \\ 0&0\end{pmatrix}\right). 
$$
\end{example}

The next lemma follows from the definition. 
See \cite[10.6.5]{TY05} for instance.

\begin{lemma}\label{morphism}
Let $\mu : R \to \Hom _{S-alg} (R, \End _S (S \otimes _k V))_0$ and $\nu : R \to \Hom _{S-alg} (R, \End _S (S \otimes _k W))_0$ be matrix representations. 
A degree $i$ graded $S$-homomorphism $\alpha _i  \in \Hom_S (S\otimes _k V, S\otimes _k W)_i$ is a graded $R$-homomorphism if and only if $\alpha _i \circ \mu (r) = \nu (r) \circ \alpha _i $ for each homogenous element $r \in R$. 
\end{lemma}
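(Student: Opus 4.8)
The plan is to unwind the definitions; no real machinery is needed. Recall that a matrix representation $\mu \in \Hom_{S-alg}(R, \End_S(S\otimes_k V))_0$ is precisely the datum of the graded $R$-module structure carried by $S\otimes_k V$ refining its given $S$-module structure: for a homogeneous $r\in R$ and $m\in S\otimes_k V$ one sets $r\cdot m := \mu(r)(m)$, and the fact that $\mu$ is a degree-$0$ graded $S$-algebra homomorphism guarantees that this action is additive, associative, unital, $S$-bilinear, and homogeneous in the sense that $r\cdot M_j \subseteq M_{j+\deg r}$ (because $\mu(r)$ is an $S$-endomorphism of degree $\deg r$). The same applies to $\nu$ and $S\otimes_k W$.

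First I would observe that, since $\alpha_i$ is by hypothesis $S$-linear and homogeneous of degree $i$, the only remaining requirement for $\alpha_i$ to be a graded $R$-homomorphism is $R$-linearity, i.e. $\alpha_i(r\cdot m) = r\cdot \alpha_i(m)$ for all $r\in R$ and all $m\in S\otimes_k V$. Because both sides of this identity are additive in $r$ and $R=\bigoplus_j R_j$, it suffices to test it on homogeneous $r$; one also checks that the degrees match up, as $\alpha_i\circ\mu(r)$ and $\nu(r)\circ\alpha_i$ are both homogeneous $S$-homomorphisms $S\otimes_k V \to S\otimes_k W$ of degree $i+\deg r$.

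Then I would translate. Substituting the two module structures, the condition $\alpha_i(r\cdot m) = r\cdot \alpha_i(m)$ becomes $\alpha_i(\mu(r)(m)) = \nu(r)(\alpha_i(m))$, that is $(\alpha_i\circ\mu(r))(m) = (\nu(r)\circ\alpha_i)(m)$ for every $m\in S\otimes_k V$. Since two $S$-homomorphisms out of $S\otimes_k V$ coincide exactly when they agree on all elements, this holds for all $m$ precisely when $\alpha_i\circ\mu(r) = \nu(r)\circ\alpha_i$ as $S$-homomorphisms. Letting $r$ range over the homogeneous elements of $R$ yields the asserted equivalence.

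There is essentially no genuine obstacle: the statement is the standard dictionary between module homomorphisms and intertwiners of matrix representations (cf. \cite[10.6.5]{TY05}). The only point deserving a sentence of care is that restricting the test from all of $R$ to homogeneous elements loses nothing, which is immediate from additivity together with the direct-sum decomposition of $R$; if desired one may reduce even further to a finite set of homogeneous $S$-algebra generators of $R$, using that $\mu$ and $\nu$ are algebra homomorphisms and that the intertwining relation $\alpha_i\circ\mu(-) = \nu(-)\circ\alpha_i$ is stable under $k$-linear combinations and products in $R$.
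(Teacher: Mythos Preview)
Your proof is correct and matches the paper's approach: the paper simply states that the lemma follows from the definition and refers to \cite[10.6.5]{TY05}, which is exactly the unwinding you have carried out. There is nothing to add.
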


For graded CM $k$-algebras $A$ and $B$, we say that a morphism of algebraic varieties $\Phi : \RepS (A, V)(k) \to \RepS (B, V)(k)$ is in an $\grEnd _S (S \otimes _k V)$-equivariant if $\alpha _i \circ \mu = \mu \circ \alpha _i$ implies $\alpha _i \circ \Phi (\mu ) = \Phi (\mu) \circ \alpha _i$ for each $\mu \in \RepS (A, V)(k)$, $\alpha _i \in \End _S (S \otimes _k V)_i$ and $i \in \Z$. 
We also say that a graded CM $k$-algebra $R$ is Gorenstein on the punctured spectrum if each graded localization $R_{(\p )}$ is Gorenstein for each graded prime ideal $\p$ with $\p \not= R_{+}$.

Bongartz \cite{B} studied how much the knowledge of module varieties (of finite dimensional modules) reflect the structure of the (finite dimensional) algebras. 
Here we have the analogous result.

\begin{theorem}\label{morita}
Let $A$ and $B$ be graded CM $k$-algebras with $A$, $B \cong S \otimes_k V$. 
Suppose that $B$ is an integral domain and Gorenstein on the punctured spectrum. 
Then the following conditions are equivalent: 

\begin{itemize}
\item[(a)] $A$ and $B$ are isomorphic graded $S$-algebras. 

\item[(b)] For the finite dimensional graded $k$-vector space $V$, the algebraic sets $\RepS (A, V)(k)$ and $\RepS (B, V)(k)$ are isomorphic in an $\grEnd _S (S \otimes _k V)$-equivariant. 

\end{itemize} 
\end{theorem}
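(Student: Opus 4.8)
The direction (a) $\Rightarrow$ (b) should be routine: an isomorphism $\varphi : A \xrightarrow{\sim} B$ of graded $S$-algebras induces, for any matrix representation $\mu : B \to \grEnd_S(S\otimes_k V)_0$ of a $B$-module, the composite $\mu\circ\varphi : A \to \grEnd_S(S\otimes_k V)_0$, which is a matrix representation of an $A$-module. The assignment $\mu \mapsto \mu\circ\varphi$ is a morphism of the affine varieties $\RepS(B,V)(k) \to \RepS(A,V)(k)$ (it is even a linear map in the coordinates), it is bijective with inverse induced by $\varphi^{-1}$, and by Lemma \ref{morphism} it commutes with the prescribed intertwining condition: $\alpha_i\circ\mu = \mu\circ\alpha_i$ (for all homogeneous $r$, reading $\mu$ as $\mu(r)$) is equivalent to $\alpha_i\circ(\mu\circ\varphi) = (\mu\circ\varphi)\circ\alpha_i$ since $\varphi$ is surjective and $\alpha_i$ does not see $\varphi$. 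Hence the isomorphism is $\grEnd_S(S\otimes_k V)$-equivariant in the stated sense.

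The substance is (b) $\Rightarrow$ (a). The plan is to exploit the canonical point of these representation schemes: $A$ itself, being graded free over $S$ of type $V$, comes with a tautological matrix representation $\mu_A \in \RepS(A,V)(k)$, and likewise $\mu_B$ for $B$. Let $\Phi : \RepS(A,V)(k) \xrightarrow{\sim} \RepS(B,V)(k)$ be the given equivariant isomorphism. I would first argue that $\Phi$ carries the orbit $\OO(\mu_A)$ to $\OO(\mu_B)$: the point $\mu_A$ is characterized among all points of $\RepS(A,V)(k)$ by the property that $\End_S$-intertwiners of $\mu_A$ are exactly the $\grEnd$-endomorphisms of $A$ as an $A$-module, i.e.\ $\grEnd_A(A) = A$ acting by homotheties, and this endomorphism algebra is a commutative local (graded) domain of maximal possible dimension over $S$; the equivariance of $\Phi$ transports the full intertwiner algebra isomorphically, and the hypotheses on $B$ (domain, Gorenstein on the punctured spectrum) are what force the image point to again be the free module $B$ rather than some other module with an accidentally large endomorphism ring. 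Concretely, $\Phi(\mu_A)$ is a matrix representation of some graded MCM $B$-module $N$ with $\grEnd_B(N) \cong A$ as graded $S$-algebras; since $A$ is a domain, $N$ is indecomposable, and a dimension count (the rank of $\grEnd_B(N)$ over $S$ equals that of $\grEnd_B(B)=B$, using $\dim_k A_i = \dim_k B_i$) together with the Gorenstein-on-the-punctured-spectrum hypothesis should pin down $N \cong B(j)$ for some shift $j$; replacing $V$ by a shift we may take $N = B$.

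Once $\Phi(\mu_A) \in \OO(\mu_B)$, conjugating by a suitable $g \in G_V$ we may assume $\Phi(\mu_A) = \mu_B$ on the nose. Then the equivariance says: every $\alpha_i \in \End_S(S\otimes_k V)_i$ commuting with $\mu_A$ also commutes with $\mu_B$, and vice versa (using $\Phi^{-1}$). That is, $\grEnd_A(A) = \grEnd_B(B)$ as graded $S$-subalgebras of $\grEnd_S(S\otimes_k V)$. But $\grEnd_A(A)$ is precisely the image of $A$ under $\mu_A$ — the homotheties — and similarly for $B$. So $\mu_A(A) = \mu_B(B)$ as $S$-subalgebras of $\grEnd_S(S\otimes_k V)$, and since $\mu_A$ and $\mu_B$ are injective (the modules are faithful: $A$, $B$ contain $1$) we get $A \cong \mu_A(A) = \mu_B(B) \cong B$ as graded $S$-algebras. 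I expect the main obstacle to be the middle step — showing that the equivariant isomorphism must send the canonical free point to the canonical free point — since a priori $\Phi$ could scramble orbits; this is exactly where integrality of $B$ and the Gorenstein-on-the-punctured-spectrum condition must be used, presumably via the fact that for such $B$ the only rank-$\mathrm{rk}_S B$ graded MCM module with commutative endomorphism ring of full $S$-rank is $B$ itself up to shift.
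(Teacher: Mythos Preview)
Your treatment of (a) $\Rightarrow$ (b) is fine.

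For (b) $\Rightarrow$ (a) you correctly isolate the decisive fact: if $N$ is the graded MCM $B$-module corresponding to $\Phi(\mu_A)$, then the equivariance of $\Phi$ (and of $\Phi^{-1}$) forces the intertwiner algebras to coincide as graded $S$-subalgebras of $\grEnd_S(S\otimes_k V)$, whence $\grEnd_B(N) \cong \grEnd_A(A) \cong A$ as graded $S$-algebras. This is exactly the paper's first move.

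Where you go astray is in what you do next. You attempt to prove that $N \cong B$ up to shift, flagging this yourself as ``the main obstacle,'' and only afterwards conclude $A \cong \grEnd_B(B) \cong B$. This detour is both unjustified and unnecessary. Unjustified: your sketch appeals to ``$A$ is a domain'' (only $B$ is assumed to be one) and then asserts that a rank count on $\grEnd_B(N)$ together with the Gorenstein-on-the-punctured-spectrum hypothesis ``should pin down $N \cong B(j)$''; but a rank-one torsion-free module over such $B$ with endomorphism ring isomorphic to $B$ need not be free, and you give no argument ruling this out. Unnecessary: the paper never identifies $N$. From $A \cong \grEnd_B(N)$ one simply observes that $N$ is torsion-free (this is where Gorenstein on the punctured spectrum enters, via \cite[Proposition~12.8]{LW12}), so that the multiplication map $B \to \grEnd_B(N)$, $b \mapsto b_N$, is injective because $B$ is a domain. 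Composing yields an injective graded $S$-algebra homomorphism $B \hookrightarrow A$, and since $\dim_k A_i = \dim_k B_i$ for every $i$ this injection is an isomorphism. The hypotheses on $B$ are used only to make the multiplication map injective, not to classify $N$; the step you feared hardest is simply absent from the paper's argument.
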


\begin{proof}
We show that (b) implies (a). 
Let $\mu \in \RepS (A, V)(k)$ be the matrix representation of $A$ and $Y$ the graded MCM $B$-module which corresponds to $\Phi ( \mu)$. 
From the knowledge of matrix representations (Lemma \ref{morphism}) and the assumption of $\Phi$, we have $S$-algebra isomorphisms $A \cong \grEnd_A ( A) \cong \grEnd_B (Y )$. 
Since $B$ is Gorenstein on the punctured spectrum, $Y$ is torsion free (cf. \cite[Proposition 12.8]{LW12}). 
Moreover $B$ is an integral domain, thus the natural graded $S$-algebra morphism which sends $b \in B$ to the multiplication mapping $b_Y$ by $b$ on $Y$  
$$
B \to {}^{\ast} \End_B ( Y ); \quad b \mapsto b_Y 
$$
is injective. 
Hence we have an injective graded $S$-algebra morphism $B \to A$. 
Since $A$ and $B$ have the same type $V$, that is, $\dim _k A_i = \dim _k B_i$ for each $i$, the morphism must be isomorphism. 
Therefore $A$ is isomorphic to $B$ as a graded $S$-algebra. 
\end{proof}

\begin{remark}
In the result due to Bongartz \cite[Theorem 1]{B}, it is assumed that the isomorphism of the module varieties is in a $\mathrm{GL}_d$-equivariant. 
Under the notation in Theorem \ref{morita},  if we assume that the isomorphism is in a $G_V$-equivariant, we only have the isomorphism $A_0 \cong \End_A (A)_0 \cong \End _B(Y)_0$. 
That is why we assume that the isomorphism in an $\grEnd _S (S \otimes _k V)$-equivariant instead of in a $G_V$-equivariant. 
\end{remark}

The assumption on $B$ is given only to prove the theorem. 
At the end of this section, we state the conjecture.

\begin{conjecture}
Theorem \ref{morita} would be  valid without any assumptions on algebras $A$ and $B$ except that $A$ and $B$ are isomorphic as graded $S$-modules. 
\end{conjecture}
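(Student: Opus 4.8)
The plan is to keep the structure of the proof of Theorem~\ref{morita} but to replace the two places where the hypotheses on $B$ intervene by an argument at the generic point of $S$. First, the implication (a)$\Rightarrow$(b) uses nothing: a graded $S$-algebra isomorphism $A\cong B$ transports matrix representations and the commutation relations of Lemma~\ref{morphism}, producing a $\grEnd_S(S\otimes_k V)$-equivariant isomorphism. So only (b)$\Rightarrow$(a) is at stake. Let $\mu\in\RepS(A,V)(k)$ be the regular representation of $A$ and set $Y=\Phi(\mu)$. Exactly as before, the equivariance of $\Phi$ together with Lemma~\ref{morphism} gives, with no hypotheses on $B$, graded $S$-algebra isomorphisms $A\cong\grEnd_A(A)\cong\grEnd_B(Y)$. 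Since $A$ and $B$ have the same type $V$, the ring $\grEnd_B(Y)\cong A$ has the same graded $k$-dimension as $B$ in each degree (Remark~\ref{Hilbert series}), so the natural graded $S$-algebra map $\lambda_Y\colon B\to\grEnd_B(Y)$, $b\mapsto b_Y$, is an isomorphism as soon as it is injective. Thus the conjecture reduces to the single assertion that $Y=\Phi(\mu)$ is a faithful $B$-module.

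Second, I would detect faithfulness after inverting $S$. Write $K=\mathrm{Frac}(S)$, $B_K=B\otimes_S K$ and $Y_K=Y\otimes_S K$. Because $B$, $Y$ and $\grEnd_B(Y)$ are all graded free over $S$ (they are MCM of type $V$), flat base change gives
$$
\dim_K B_K=\dim_K Y_K=\dim_K\mathrm{End}_{B_K}(Y_K)=m,\qquad m=\dim_k V,
$$
and $\mathrm{Ann}_B(Y)\otimes_S K=\mathrm{Ann}_{B_K}(Y_K)$. As $\mathrm{Ann}_B(Y)$ is an ideal of the $S$-free ring $B$, hence $S$-torsion-free, it vanishes if and only if it vanishes generically; so $Y$ is faithful over $B$ if and only if $Y_K$ is faithful over the Artinian $K$-algebra $B_K$. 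This turns the conjecture into a purely finite-dimensional question: \emph{if $B_K$ is a commutative Artinian $K$-algebra of dimension $m$ and $Y_K$ is a $B_K$-module with $\dim_K Y_K=\dim_K\mathrm{End}_{B_K}(Y_K)=m$, is $Y_K$ faithful?}

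In the reduced case this is immediate and uses the standing hypothesis $\mathrm{char}\,k=0$. Indeed, if $B$ is reduced then $B_K$ is a reduced Artinian algebra over the perfect field $K$, hence a product of fields $B_K=\prod_i L_i$; writing $Y_K=\prod_i Y_i$ with $r_i=\dim_{L_i}Y_i$, the dimension count becomes
$$
\sum_i r_i^2\,[L_i:K]=\sum_i r_i\,[L_i:K]=\sum_i[L_i:K],
$$
which forces every $r_i=1$, so each $Y_i\neq 0$ and $Y_K$ is faithful. Hence Theorem~\ref{morita} already holds, with no Gorenstein or domain hypothesis, whenever $B$ is reduced. The remaining content of the conjecture is exactly the non-reduced case: since a CM ring has no embedded primes, $B$ fails to be reduced precisely when $B_K$ carries nilpotents, and then one must exclude non-faithful modules $Y_K$ of dimension $m$ whose endomorphism algebra also has dimension $m$.

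This non-reduced case is the main obstacle, and it is where the global input provided by $\Phi$ should re-enter. The finite-dimensional statement one wants — that over a commutative Artinian algebra a module whose dimension and endomorphism dimension both equal $\dim_K B_K$ must be faithful — is clear for $B_K$ self-injective (Gorenstein) but can fail for general Artinian algebras, so a purely local argument is unlikely to suffice. I would therefore try to combine the faithfulness question with the symmetry of the hypothesis: applying the same construction to $\Phi^{-1}$ yields an MCM $A$-module $X$ with $B\cong\grEnd_A(X)$, and feeding the two identifications $A\cong\grEnd_B(Y)$ and $B\cong\grEnd_A(X)$ into a double-centralizer comparison should pin down $Y$ (and $X$) as the free modules, equivalently show that an equivariant isomorphism must send the free orbit $\OO(\mu)$ to the free orbit of $\RepS(B,V)(k)$. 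Proving this rigidity of the free orbit under equivariant isomorphisms, independently of the behaviour of $B_K$ at its non-reduced points, is what I expect to be the hard part.
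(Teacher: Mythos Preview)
The statement you are addressing is not proved in the paper: it is explicitly labeled a \emph{Conjecture} and left open, so there is no ``paper's own proof'' to compare against. What the paper does prove is Theorem~\ref{morita}, under the extra hypotheses that $B$ is an integral domain and Gorenstein on the punctured spectrum; the conjecture asks whether these can be dropped.

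Your proposal is not a proof of the conjecture either, and you say so yourself: the non-reduced case is left as ``the hard part'', with only a heuristic (symmetry in $\Phi,\Phi^{-1}$ and a hoped-for double-centralizer rigidity of the free orbit) in place of an argument. So as a proof of the conjecture there is a genuine gap --- precisely the gap the paper leaves open.

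That said, your reduction is sound and your reduced-case argument is a real improvement over Theorem~\ref{morita}. Passing to $K=\mathrm{Frac}(S)$ is legitimate because $B$, $Y$, and $\grEnd_B(Y)\cong A$ are $S$-free of the same rank $m=\dim_k V$, localization commutes with $\mathrm{Hom}$ over the Noetherian ring $B$, and $\mathrm{Ann}_B(Y)$ is $S$-torsion-free, so faithfulness can be tested at the generic point. When $B$ is reduced, $B_K$ is a reduced Artinian algebra over the (characteristic~$0$, hence perfect) field $K$, and your dimension count $\sum r_i^2[L_i:K]=\sum r_i[L_i:K]=\sum[L_i:K]$ forces all $r_i=1$, hence $Y_K$ faithful. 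This already establishes the conjecture for \emph{reduced} $B$, with no domain or punctured-Gorenstein hypothesis, which is strictly more than the paper proves. The remaining obstacle is exactly the one you identify: for a non-reduced Artinian $B_K$ there exist non-faithful modules $Y_K$ with $\dim_K Y_K=\dim_K\mathrm{End}_{B_K}(Y_K)=\dim_K B_K$, so the local count alone cannot finish, and some global input from the equivariant isomorphism $\Phi$ is needed. Turning your symmetry idea into an actual argument would be the substance of a proof; as written it is a plan, not a proof.
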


\section{On graded countable Cohen--Macaulay representation type}\label{countable}

In what follows, we always assume that $k$ is an algebraically closed uncountable field. 
In this section we focus on graded CM $k$-algebras which are of graded countable CM representation type. 
Note that graded countable type includes graded finite type, in which case there are only finitely many isomorphism classes of indecomposable graded MCM modules up to shift. 
To avoid confusion, we make the following definition.

\begin{definition}
We say that $R$ is of graded countable {\it{strictly}} CM representation type if there are {\it infinitely} but only countably many isomorphism classes of indecomposable graded MCM modules up to shift. 
\end{definition}

\begin{example}
Let $R = k[x, y]/(x^2)$ with $\deg x = \deg y = 1$. 
It is known that $R$ is of graded strictly countable CM representation type whose indecomposable graded MCM $R$-modules are $I_n  = (x, y^n)R$ for $n\geq 0$ up to shift (cf \cite{BGS87}). 
The graded Noetherian normalization of $R$ is $S = k[y]$. 
Set $V = k(-1) \oplus k(-n)$.
Then $I_n \cong S \otimes _k V$ as $S$-modules. 
\end{example}

We state our main theorem of this paper.

\begin{theorem}\label{main}
Let $R$ be of graded countable CM representation type. 
For each finite dimensional graded $k$-vector space $V$, there are finitely many isomorphism classes of graded MCM $R$-modules of type $V$.
\end{theorem}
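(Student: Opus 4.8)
The plan is to translate the problem into the geometry of the representation variety $\RepSRV(k)$ and combine a dimension count with a Baire-category / countability argument, exactly the kind of reasoning familiar from finite-dimensional module varieties. First I would fix $V$ and consider $X=\RepSRV(k)$, which by Remark \ref{DS} is an affine variety of finite type, equipped with the action of the algebraic group $G_V=\mathrm{Aut}_S(S\otimes_k V)_0$; by Remark \ref{F}(1) the isomorphism classes of graded MCM $R$-modules of type $V$ correspond bijectively to the $G_V$-orbits in $X$. So the theorem is equivalent to the statement that $X$ has only finitely many $G_V$-orbits.

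The key step is to show each orbit is open in its closure, or more precisely that $X$ decomposes into countably many locally closed $G_V$-stable pieces that are single orbits. Since $R$ is of graded countable CM representation type, every graded MCM $R$-module is a finite direct sum of indecomposables drawn from a countable list $\{M_\lambda\}_{\lambda\in\Lambda}$ (up to shift), and for fixed Hilbert series/type $V$ only finitely many shifted indecomposables can occur as summands and the multiplicities are bounded (the summands' Hilbert series must add up to that of $V$, cf.\ Remark \ref{Hilbert series}). Hence the set of isomorphism classes of type-$V$ modules is a priori at most countable: $X$ is a countable union of $G_V$-orbits. Now I would invoke the fact that $X(k)$, being the $k$-points of a variety over an uncountable algebraically closed field, cannot be written as a countable union of proper closed subsets (a Baire-type category argument). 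Taking closures, $X=\bigcup_{i} \overline{\OO(x_i)}$ is a countable union of closed sets, so some $\overline{\OO(x_j)}$ contains a component of $X$; then on that component the orbit $\OO(x_j)$, being constructible and dense, contains a dense open subset, and its complement in the component is again a countable union of orbits of strictly smaller dimension. Running a Noetherian induction on $\dim X$ over the finitely many irreducible components then forces the list of orbits to terminate after finitely many steps.

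Concretely the induction would go: if $X$ had infinitely many $G_V$-orbits, pick an irreducible component $X_0$ meeting infinitely many orbits; by the uncountability argument $X_0$ is not a countable union of proper closed subsets, so one orbit $\OO$ is dense in $X_0$ and (being constructible) contains a nonempty open $U\subseteq X_0$; then $X_0\setminus U$ is a proper closed $G_V$-stable subset still meeting infinitely many orbits but of strictly smaller dimension, and iterating across the finitely many components contradicts descending chain condition on dimension. The conclusion is that $X$ has finitely many $G_V$-orbits, i.e.\ finitely many isomorphism classes of graded MCM $R$-modules of type $V$.

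The main obstacle I anticipate is the orbit-geometry input: verifying that $G_V$-orbits in $\RepSRV(k)$ are locally closed (constructible, which follows from Chevalley, is enough for the argument as stated, but local closedness makes it cleaner), and making rigorous the passage "countable union of orbits $\Rightarrow$ countable union of closed sets $\Rightarrow$ one closure contains a component" over an uncountable field — i.e.\ the precise form of the Baire-category statement for varieties and the fact that a constructible dense subset of an irreducible variety contains a dense open set. A secondary technical point is confirming that graded countable CM type really does bound, for fixed $V$, both which shifted indecomposables appear and with what multiplicity, so that the orbit set is genuinely (at most) countable rather than merely parametrized by a countable-dimensional family; this uses that the Hilbert series of the summands are forced and that there are only finitely many admissible shifts.
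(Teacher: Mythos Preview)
Your proposal is correct and follows essentially the same route as the paper: reduce to counting $G_V$-orbits in $\RepSRV(k)$, observe that countable CM type forces at most countably many orbits, note that orbits are locally closed, and then run a Noetherian induction on dimension using the fact (over an uncountable field) that an irreducible variety is not a countable union of proper closed subsets. The paper packages the last two steps into a standalone topological statement (its Theorem~\ref{B}, resting on Lemma~\ref{A}) and then applies it, whereas you inline the same argument; your use of constructibility via Chevalley is fine, but in fact orbits of an algebraic group action are locally closed (the paper cites this as Remark~\ref{E}(1)), so the ``main obstacle'' you flag is already handled. Your secondary worry about bounding shifts and multiplicities is unnecessary: countably many indecomposables up to shift, times countably many shifts, gives countably many indecomposables, and finite multisets from a countable set are countable---so the orbit set is countable without any Hilbert-series bookkeeping.
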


It is natural  to ask what happens if we fix the Hilbert polynomial instead of the Hilbert series. 
We have the following example.

\begin{example}
Let $R = k[x, y]/(x^2)$ with $\deg x = \deg y = 1$ and $I_n = (x, y^n)R$. 
Then Hilbert polynomials of $I_n$ are $2$ for all $n$ and $I_n \not\cong I_m$ if $n \not= m$. 
\end{example}

Now let us prove Theorem \ref{main}.  
First we mention a lemma. 

\begin{lemma}\label{A}
Let $X \subseteq \Ank$ be an algebraic set and let $X_i \subsetneq X$ be closed subsets with $\dim X_i \lneq \dim X$. 
Then $X$ can be never represented by a countable union of $X_i$.  
\end{lemma}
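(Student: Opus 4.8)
The plan is to reduce, in two steps, to the assertion that affine space $\mathbb{A}^d(k)$ over an uncountable field cannot be written as a countable union of proper closed subsets, and then to prove that assertion by induction on $d$.

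First I would reduce to the irreducible case. Put $d = \dim X$ and choose an irreducible component $Z$ of $X$ with $\dim Z = d$ (such a component exists, $X$ being Noetherian). If $X = \bigcup_i X_i$, then $Z = \bigcup_i (X_i \cap Z)$, and each $X_i \cap Z$ is closed in $Z$ with $\dim (X_i \cap Z) \le \dim X_i < d = \dim Z$; since $Z$ is irreducible this forces $X_i \cap Z \subsetneq Z$. So it suffices to treat the case in which $X$ is irreducible and the $X_i$ are proper closed subsets. If $d = 0$ then $X$ is a point, which has no nonempty proper closed subset, so $\bigcup_i X_i = \emptyset \neq X$; hence we may assume $d \ge 1$. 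Next, by Noether normalization of $k[X]$ there is a finite surjective morphism $\pi : X \to \mathbb{A}^d(k)$. A finite morphism is closed, so each $\pi(X_i)$ is closed in $\mathbb{A}^d(k)$, and since $\pi$ has finite fibres $\dim \pi(X_i) \le \dim X_i < d$, so $\pi(X_i) \subsetneq \mathbb{A}^d(k)$. From $X = \bigcup_i X_i$ we get $\mathbb{A}^d(k) = \bigcup_i \pi(X_i)$, and enlarging each $\pi(X_i)$ to a hypersurface $\mathit{V}(f_i)$ with $0 \ne f_i \in k[x_1,\dots,x_d]$ reduces us to showing that $k^d$ is not covered by countably many hypersurfaces $\mathit{V}(f_i)$.

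This last claim I would prove by induction on $d$. For $d = 1$ each $\mathit{V}(f_i)$ is finite, hence a countable union of them is countable, contradicting the uncountability of $k$. For $d \ge 2$, expand each $f_i$ as a polynomial in $x_d$ with coefficients $c_{ij} \in k[x_1,\dots,x_{d-1}]$; as $f_i \ne 0$, some $c_{i,j(i)}$ is nonzero. By the induction hypothesis applied in $\mathbb{A}^{d-1}(k)$, the set $\bigcup_i \mathit{V}(c_{i,j(i)})$ is not all of $k^{d-1}$, so pick $a$ outside it. Then every $f_i(a, x_d)$ is a nonzero polynomial in $x_d$, so the line $\{a\} \times k$ meets $\bigcup_i \mathit{V}(f_i)$ in only countably many points and therefore is not contained in it, a contradiction.

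The only delicate points are the bookkeeping in the two reductions — one must check that dimensions strictly drop and that closedness is preserved, which is exactly where finiteness of $\pi$ is used — together with the base-case invocation of the uncountability of $k$; everything else is routine. The hypothesis that $k$ is uncountable is of course essential and is used precisely in the case $d=1$.
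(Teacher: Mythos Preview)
Your argument is correct, but the route differs from the paper's. The paper inducts on the ambient dimension $n$: after reducing to $X$ irreducible, it finds (by uncountability of $k$) a constant $\lambda$ such that the hyperplane $x_1=\lambda$ meets $X$ and every $X_i$ properly, and then replaces $X$ by $X\cap V(x_1-\lambda)\subseteq\mathbb{A}^{n-1}(k)$, preserving the strict dimension gap. You instead first project via Noether normalization to $\mathbb{A}^d(k)$ with $d=\dim X$, which turns the problem into the clean model statement that $k^d$ is not a countable union of hypersurfaces, and then you run an induction on $d$ by specializing all but one coordinate. The paper's approach is more self-contained, avoiding Noether normalization and the closedness of finite morphisms, at the cost of tracking minimal primes of the $I_i$ to ensure dimensions drop correctly after slicing; your approach front-loads a bit of machinery but yields a tidier inductive step once you are in affine space. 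In both proofs the uncountability of $k$ enters in exactly the same way, to pick a parameter avoiding countably many bad loci.
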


\begin{proof}
We prove by induction on $n$. 
Suppose that $n = 1$. 
Then $\dim X_i = 0$, so that $X_i$ is a finite set of points. 
Assume that  $X = \cup_{i\geq1}X_i$, and then $X$ contains infinitely but countably many points. 
This is a contradiction. 
Since $k$ is an uncountable filed $X$ must contain uncountably many points. 
Suppose that $n \geq  2$. 
Considering the irreducible decomposition of $X$, we may assume that $X$ is irreducible. 
Then $X$  is represented by $V (\p)$ for some prime ideal $\p$ in $k[ x_1. x_2, \cdots , x_n]$. 
We also put $I_i$ ideals with $X_i = V (I_i)$. 
After renumbering, we may assume that $X$ is not contained in $V (x_1 - c)$ for all $c \in k$. 
For each minimal prime ideal $\q$ of $I_i$, there are a finite number of elements $c \in k$ such that $x_1 - c \in \q$. 
Note that a number of minimal prime ideals are finite. 
Recall that $k$ is an uncountable field. 
Thus we can take $\lambda \in k$ such that $x_1 - \lambda$ is neither contained in $\p$ nor all minimal prime ideals of $I_i$ for all $i$. 
Consider the quotient by $x_1 - \lambda$. 
Then we can reduce the case that $X=\cup_{i\geq1}X_i$ is a closed subset of $\mathbb{A}^{n-1}_k$ preserving $\dim X_i \lneq \dim X$ for all $i$. 
By the induction hypothesis, we obtain the assertion. 
\end{proof}

\begin{theorem}\label{B}
Let $X\subseteq \Ank$ be an algebraic set. 
Suppose that $X$ is represented by a countable disjoint union of locally closed subsets, that is  $X = \cup _{i \geq 1} Y_i$ where $Y_i$ are locally closed and $Y_i \cap Y_j = \varnothing$ for $i \not= j$. 
Then it is a ``finite" union. 
\end{theorem}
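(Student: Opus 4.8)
The plan is to argue by induction on $d = \dim X$, with Lemma \ref{A} as the essential input, and the uncountability of $k$ entering only through that lemma. Throughout I would discard all empty $Y_i$, so that the claim becomes: the index set is finite. I would also use freely that the intersection of a locally closed set with a closed set is locally closed, and that a locally closed set is open in its own closure.

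First I would reduce to the case that $X$ is irreducible. Writing $X = Z_1 \cup \cdots \cup Z_r$ for its finitely many irreducible components, each $Z_j$ is an algebraic set in $\Ank$ and $Z_j = \bigsqcup_i (Y_i \cap Z_j)$ is again a disjoint union of locally closed subsets. Since every nonempty $Y_i$ satisfies $Y_i = \bigcup_j (Y_i \cap Z_j)$ and hence meets some $Z_j$, a bound on the number of indices occurring in each $Z_j$ yields a bound for $X$; so it suffices to treat each $Z_j$, i.e.\ we may assume $X$ irreducible. The base case $d = 0$ is then immediate: $X$ is a single point and the only nonempty locally closed subset of it is $X$ itself.

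For the inductive step ($d \geq 1$) I would note that $X = \bigcup_i Y_i \subseteq \bigcup_i \overline{Y_i} \subseteq X$, so $X$ is a countable union of the closed subsets $\overline{Y_i}$ of the irreducible set $X$. If every $\overline{Y_i}$ were proper in $X$, then by irreducibility $\dim \overline{Y_i} \lneq \dim X$, contradicting Lemma \ref{A}; hence $\overline{Y_{i_0}} = X$ for some $i_0$. Being locally closed, $Y_{i_0}$ is open in its closure $X$, so $Y_{i_0}$ is a nonempty open dense subset of $X$, and $X' := X \setminus Y_{i_0}$ is a proper closed subset of the irreducible set $X$, whence $\dim X' \lneq d$. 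Because the $Y_i$ are pairwise disjoint and cover $X$, we have $X' = \bigsqcup_{i \neq i_0} Y_i$, still a disjoint union of locally closed subsets of $\Ank$. Applying the induction hypothesis to the algebraic set $X'$ shows this union is finite, so $X = Y_{i_0} \sqcup X'$ is a finite union.

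The main obstacle I anticipate is not depth but bookkeeping: I must make sure each reduction stays within the hypotheses of Lemma \ref{A} — in particular that "proper closed subset of an irreducible set" really forces a strict drop in dimension, which is exactly why passing to irreducible components before invoking the lemma is necessary — and that a dense locally closed subset of an irreducible variety is genuinely open, so that removing it leaves a closed, hence lower-dimensional, algebraic set. Once these points are checked, the argument is formal.
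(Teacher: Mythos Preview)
Your proof is correct and follows essentially the same strategy as the paper's: pass to irreducible components, use Lemma \ref{A} to locate some $Y_{i_0}$ whose closure is the whole component, remove that open dense piece to drop the dimension strictly, and repeat. The paper phrases this as an explicit recursive descent through at most $\dim X$ levels rather than as a formal induction on $\dim X$, but the content is identical.
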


\begin{proof}
Let $X_1$, $X_2$, ... $X_m$ be irreducible components of $X$. 
For each component $X_k$, we have
$$
X_k = X_k \cap X = X_k \cap (\cup _{i \geq 1} Y_i )= \cup _{i \geq 1} (X_k \cap Y_i ). 
$$
Note that $X_k \cap Y_i $ are locally closed for all $i$. 
By Lemma \ref{A}, there exists $j$ such that $\dim X_k = \dim X_k \cap Y_j$, so that $X_k = \overline{X_k \cap Y_j}$. 
Since $X_k \cap Y_j$ is open in $\overline{X_k \cap Y_j} = X_k$, 
$$
X_{k, 2} := X_k \backslash (X_k \cap Y_j)
$$ 
is closed and $\dim X_{k, 2} \lneq \dim X_k$. 
We decompose $X_{k, 2}$ into its irreducible components $X_{k, 2, 1}$, $X_{k, 2, 2}$, ... $X_{k, 2, m'}$ and apply the above argument for each components $X_{k, 2, j}$. 
Then we also obtain the closed subsets $X_{k, 3, j'}$ with $\dim X_{k, 3, j'} \lneq \dim X_{k, 2, j'}$ for all $j'$.   
Repeating this at most $\dim X$ times, we achieve the case that each components are of dimension $0$. 
Since a closed subset of dimension $0$ is a finite set of points, these subset can be represented by a finite union of (locally) closed subsets which derive from $Y_i$. 
A number of $Y_i$ which are appeared in this arguments is finite, so that we obtain the assertion.  
\end{proof}
 

\begin{remark}\label{E} 
Let $X$ be a $G$-variety and $x \in X$. 
Then one can show the following statements hold. 
\begin{itemize}
\item[(1)] $\OO (x)$ is locally closed. See \cite[Proposition 21.4.3(i)]{TY05}.  
\item[(2)] $\OO (x) = \OO (y)$ if and only if $\overline{\OO (x) } = \overline{\OO (y)}$ for $x, y \in X$. See \cite[Proposition 3.5]{K82} for instance. 
\end{itemize}
\end{remark}

The proof of Theorem \ref{main} is based on our Theorem \ref{B}.

\medskip
\noindent
{\it Proof of Theorem \ref{main}.}
According to Remark \ref{F}, it is enough to show that $\RepSRV (k)$ consists of finitely many orbits. 
If $R$ is of graded finite representation type,  the assertion is clear. 
Suppose that $R$ is of graded strictly countable CM representation type. 
Then $\RepSRV (k)$ consists of infinitely but countably many orbits. 
It follows from Theorem \ref{B} and Remark \ref{E} that $\RepSRV (k)$ is a finite union of orbits. 
\qed
\medskip

\section{An application}\label{application}

In the rest of this paper, we investigate the relation between graded countable CM representation type and graded discrete CM representation type. 
The following definition is taken from \cite{DT14}.

\begin{definition}\cite[Definition 1.1.]{DT14}
We say that $R$ is of graded {\it discrete} CM representation type if, for any fixed $r >0$, there are only finitely many isomorphism classes of indecomposable graded MCM $R$-modules with rank $r$ up to shift. 
Here the rank is taken over $S$.     
\end{definition}

One can show that if $R$ is of discrete CM representation type then $R$ is of countable CM representation type. 
Because there is only a countable set of graded MCM $R$-modules up to isomorphism and shift if $R$ is of discrete. 
The converse does not hold in general.

\begin{example}
Let $R = k[x, y]/(x^2)$ with $\deg x = \deg y = 1$ and $I_n  = (x, y^n)R$. 
It is known that $I_n$ is an indecomposable graded MCM $R$-modules for $n\geq 0$. 
Note that $\mathrm{rank} _S \  I_n  = 2$ since $I_n \cong S(-1) \oplus S(-n)$ where $S=k[y]$. 
But $I_n  \not\cong I_m $ if $n \not= m$, so that $R$ is not of graded discrete CM representation type.  
\end{example}

The following theorem is due to Dao and Shipman. 
We say that $R$ is with an isolated singularity if each graded localization $R_{(\p )}$ is regular for each graded prime ideal $\p$ with $\p \not= R_{+}$.

\begin{theorem}\cite[Theorem 3.1]{DS17}\label{D}
Assume that $R$ is with an isolated singularity. 
For each $r > 0$ there exists $\alpha _r >0$ such that if $M$ is an indecomposable graded MCM $R$-module with rank $r$ then 
$$
g_{max}(M) - g_{min}(M) < \alpha _r,  
$$
where $g_{max}(M) = max \{ m | (M / S_{+} M)m \not= 0 \}$ and $g_{min}(M) = min \{ m | (M / S_{+} M)m \not= 0 \}$. 
\end{theorem}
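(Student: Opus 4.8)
The plan is to move to graded commutative algebra and forbid large gaps in the degrees of a minimal generating system, using the isolated--singularity hypothesis only to annihilate a single $\mathrm{Ext}$ group. Write $M\cong\bigoplus_{i=1}^{r}S(-a_i)$ as a graded $S$-module with $a_1\le\cdots\le a_r$; then $(M/S_{+}M)_m\neq0$ precisely for $m\in\{a_1,\dots,a_r\}$, so $g_{max}(M)-g_{min}(M)=a_r-a_1$, which is unchanged under shifts, so normalize $a_1=0$ and put $G=a_r$. Let $\delta$ be the top degree of a homogeneous $k$-algebra generator of $R$. It is enough to show that for an indecomposable $M$ of rank $r$ there is a constant $C=C(R,r)\ge\delta$, depending only on $R$ and $r$, with $a_{i+1}-a_i\le C$ for all $i$; for then $G\le(r-1)C=:\alpha_r$.

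So suppose $a_{i+1}-a_i>C$ for some $i$, and split $M\cong M'\oplus M''$ as a graded $S$-module, $M'$ being the sum of the $S(-a_j)$ with $j\le i$ and $M''$ that of the $S(-a_j)$ with $j>i$; thus $M'$ is generated in degrees $\le a_i$ and $M''$ in degrees $\ge a_{i+1}$. In a matrix representation $\mu\colon R\to\grEnd_S(M)$, the component of $\mu(g)$ carrying $M'$ into $M''$ is, for each homogeneous $k$-algebra generator $g$ of $R$, a homogeneous $S$-linear map of degree $\deg g\le\delta$; but $\grHom_S(M',M'')$ is concentrated in degrees $\ge a_{i+1}-a_i>C\ge\delta$, so that component is zero --- the same bookkeeping as in Lemma \ref{morphism}. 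Since $\mu$ is a $k$-algebra homomorphism, $\mu(R)$ is block upper triangular, so $M'$ is a graded $R$-submodule with quotient isomorphic to $M''$, and $M'$, $M''$ are both MCM because they are free over $S$. If $M$ is indecomposable the exact sequence $0\to M'\to M\to M''\to0$ is non-split, hence represents a nonzero class in $\mathrm{Ext}^1_R(M'',M')_0$.

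It therefore suffices to choose $C$ so large that $\mathrm{Ext}^1_R(P,Q)_0=0$ whenever $P$ and $Q$ are MCM $R$-modules of rank $\le r$, $P$ is generated in degrees $\ge g'$, $Q$ in degrees $\le g$, and $g'-g>C$; applied to $P=M''$, $Q=M'$, $g'=a_{i+1}$, $g=a_i$ this contradicts non-splitness, completing the reduction. To produce such a $C$ one invokes the isolated singularity. Because $P$ is MCM over an isolated singularity it is free on the punctured spectrum, so $\mathrm{Ext}^1_R(P,Q)$ is supported only at $R_{+}$ and hence of finite length, i.e.\ concentrated in a bounded interval of degrees. What the argument needs is that this interval is controlled --- both in length and in location relative to the generator degrees of $P$ and $Q$ --- uniformly over all MCM modules of rank $\le r$: then enlarging the gap $g'-g$ pushes the whole interval into strictly negative degrees, forcing $\mathrm{Ext}^1_R(P,Q)_0=0$. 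The uniform control should come from bounding the degrees of the syzygies of $P$ (equivalently $\mathrm{reg}_R(P)$) via graded local duality over the Cohen--Macaulay ring $R$ --- using $\mathrm{reg}_R(P)=n-g_{min}(\grHom_R(P,\omega_R))$ with $\grHom_R(P,\omega_R)$ again MCM of rank $\le r$ --- together with a uniform Artin--Rees bound on a power of $R_{+}$ annihilating $\mathrm{Ext}^1_R(P,Q)$.

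The main obstacle is precisely this uniformity. Finiteness of length only gives, for each fixed pair $(P,Q)$, an interval whose length and position depend on $P$ and $Q$; one must upgrade this to a bound depending on $R$ and $r$ alone --- equivalently, bound the Castelnuovo--Mumford regularity (and, dually, the least generator degree of the canonical dual) of every maximal Cohen--Macaulay module of rank $\le r$ over an isolated singularity. Huneke's uniform Artin--Rees theorem, local (or Auslander--Reiten) duality, and the fact that such modules form a bounded family are the ingredients I would expect to use. A minor point, automatic for a graded Cohen--Macaulay $k$-algebra, is the existence and tame degree behaviour of the graded canonical module $\omega_R$.
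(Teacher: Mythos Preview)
This statement is not proved in the paper: it is quoted from \cite[Theorem~3.1]{DS17} and invoked as a black box in the proof of Corollary~\ref{discrete}. There is therefore no in-paper argument to compare your attempt against.

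On its own terms, your outline is the natural strategy and is close in spirit to what Dao--Shipman do. The block--triangular reduction is correct: a gap $a_{i+1}-a_i>\delta$ forces $\Hom_S(M',M'')_{\le\delta}=0$, so $M'$ is an $R$-submodule with MCM quotient $M''$, and indecomposability of $M$ yields a nonzero class in $\mathrm{Ext}^1_R(M'',M')_0$. But the step you yourself label ``the main obstacle'' is a genuine gap, and nothing in your sketch closes it. Finite length of $\mathrm{Ext}^1_R(M'',M')$ gives, for each fixed pair, a bounded but uncontrolled degree window; a uniform annihilator (Noether different, cohomology annihilator, or Huneke--type uniform Artin--Rees) produces an $R_{+}$-primary ideal killing the Ext, yet that alone does not pin down \emph{where} its nonzero graded pieces sit. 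Your regularity-via-duality idea, as written, is circular: bounding $\mathrm{reg}_R(M'')$ by $n-g_{min}(\grHom_R(M'',\omega_R))$ trades one unknown spread for another, since the canonical dual is again an MCM module of rank $\le r$ whose generator degrees you have not yet controlled. One way to break the circle is to induct on $r$ and take $i$ \emph{minimal} with $a_{i+1}-a_i>C$, so that $M'$ has rank $<r$ and spread $\le (r-1)C$, hence genuinely bounded shape; then one must still bound the top nonvanishing degree of $\mathrm{Ext}^1_R(-,M')$ uniformly over MCM first arguments with $g_{min}\ge a_{i+1}$, which is exactly the technical core of \cite{DS17}. As it stands your proposal is a sound outline whose decisive step remains a heuristic.
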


One can deduce from the theorem that for a MCM graded $R$-module of rank $r$ up to shift, only finitely many finite dimensional graded $k$-vector space $V$ come into question. 
Indeed, for an indecomposable graded MCM modules $M$ with rank $r$, we shift $M$ to normalize it, and then $g_{min}(M) = 0$. 
Thus $0 \leq g_{max}(M) < \alpha _r$. 
Note that $\alpha _r$ depends on only $r$. 
Since $M/S_{+}M \cong V$ as $k$-modules, we obtain the claim.

\begin{corollary}\label{discrete}
Let $R$ be of graded countable CM representation type. 
Suppose that $R$ is with an isolated singularity. 
Then $R$ is of graded discrete CM representation type. 
\end{corollary}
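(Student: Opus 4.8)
The plan is to deduce the corollary by assembling Theorem~\ref{main} with the boundedness result of Dao and Shipman (Theorem~\ref{D}), following exactly the reduction sketched in the paragraph after Theorem~\ref{D}. Fix $r > 0$; we must show that there are only finitely many isomorphism classes of indecomposable graded MCM $R$-modules of rank $r$, up to shift. Since $R$ has an isolated singularity, Theorem~\ref{D} applies.

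First I would cut down the set of relevant types. Let $M$ be an indecomposable graded MCM $R$-module with $\mathrm{rank}_S M = r$, and write $M \cong S \otimes_k V$; then $\dim_k V = r$, and since $S_{+}(S \otimes_k V) = S_{+} \otimes_k V$ we have $M/S_{+}M \cong V$ as graded $k$-vector spaces, so $g_{min}(M)$ and $g_{max}(M)$ are the least and greatest degrees in which $V$ is nonzero. Replacing $M$ by a shift $M(j)$ replaces $V$ by $V(j)$ and shifts both $g_{min}$ and $g_{max}$ by the same amount, so we may normalize the shift so that $g_{min}(M) = 0$; Theorem~\ref{D} then gives $0 \le g_{max}(M) < \alpha_r$ with $\alpha_r$ depending only on $r$. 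Hence every such (normalized) $V$ is concentrated in the finite set of degrees $\{\, m \in \Z : 0 \le m < \alpha_r \,\}$ and has total dimension $r$, so it is one of only finitely many graded $k$-vector spaces $V_1, \dots, V_s$; this is precisely the claim recorded after Theorem~\ref{D}.

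Finally, for each $V_\ell$ I would invoke Theorem~\ref{main}: because $R$ is of graded countable CM representation type, there are only finitely many isomorphism classes of graded MCM $R$-modules of type $V_\ell$, hence only finitely many indecomposable ones. Since every indecomposable graded MCM $R$-module of rank $r$ has, after its normalizing shift, type $V_\ell$ for some $\ell$, taking the union over $\ell = 1, \dots, s$ shows that there are only finitely many such modules up to shift. As $r > 0$ was arbitrary, $R$ is of graded discrete CM representation type. I do not expect a genuine obstacle: the statement is a formal consequence of Theorems~\ref{main} and \ref{D}. The only point needing care is the normalization step together with the identification $M/S_{+}M \cong V$, which is what turns the a priori infinite list of admissible types into a finite one and so permits Theorem~\ref{main} to be applied only finitely many times.
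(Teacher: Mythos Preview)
Your argument is correct and follows essentially the same route as the paper: use Theorem~\ref{D} (together with the normalization $g_{min}(M)=0$ and the identification $M/S_{+}M \cong V$) to reduce to finitely many types $V_1,\dots,V_s$, and then apply Theorem~\ref{main} to each. The paper compresses this into an inclusion of sets and the phrase ``the right hand side is a finite union of $\RepSRV(k)$'', but the logical content is identical.
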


\begin{proof}
We identify an orbit of a point in $ \RepSRV (k)$ with the corresponding isomorphism classes of a graded MCM $R$-module (Remark \ref{F}(1)). 
For each $r>0$, we have the inclusion of sets:
\begin{multline*}
\{ M : \text{indecomposable graded MCM $R$-modules with rank $r$}\}/<\cong , \text{shift}> \\
\subseteq \displaystyle \bigcup _{V : \text{ finite dimensional graded } k\text{-vector space}}  \RepSRV (k). 
\end{multline*}
According to Theorem \ref{D}, the right hand side is a finite union of $ \RepSRV (k)$. 
Since $\RepSRV (k)$ consists of finitely many orbits (Theorem \ref{main}), the left hand side is a finite set.  
(Compare with \cite[Corollary A]{DS17}. See also \cite[Theorem 3.16]{Ka09}. )
\end{proof}

\begin{remark}
As mentioned in \cite[Definition 8.13.]{BD}, for a CM $k$-algebra $R$, the CM representation type of $R$ is said to be discrete if the set of isomorphism classes of indecomposable graded MCM modules is countable. 
Usually, we consider the notion in this sense. 
\end{remark}

\section*{Acknowledgments}
The author express his deepest gratitude to Yuji Yoshino for valuable discussions and helpful comments.  
The author also thank the referee for his/her careful reading and helpful comments that have improved the paper.
 
\ifx\undefined\bysame 
\newcommand{\bysame}{\leavevmode\hbox to3em{\hrulefill}\,} 
\fi

\end{document}